\newcommand{\rrvert}{\vert}
\newcommand{\llvert}{\vert}
\newtheorem{thm}{Theorem}
\newtheorem{proposition}{Proposition}
\newtheorem{lemma}{Lemma}
\theoremstyle{definition}
\newtheorem{remark}{Remark}
\theoremstyle{remark}
\newtheorem{Case}{Case}
\newcommand{\E}{\mathbb E}
\newcommand{\R}{\mathbb{R}}
\newcommand{\N}{\mathbb{N}}
\newcommand{\mmp}{\mathbb{P}}
\newcommand{\eqdistr}{\stackrel{{\rm d}}{=}}
\newcommand{\todistrfdd}{\stackrel{{\rm f.d.}}{\Longrightarrow}}
\DeclareMathOperator{\1}{\mathbh{1}}
\begin{document}
\begin{frontmatter}

\title{A functional limit theorem for random processes with immigration
in the case of heavy tails}
%
%
%

\author[a]{\inits{A.}\fnm{Alexander}\snm{Marynych}\corref{cor1}}\email
{marynych@unicyb.kiev.ua}
\cortext[cor1]{Corresponding author.}
\author[b]{\inits{G.}\fnm{Glib}\snm{Verovkin}}\email{glebverov@gmail.com}

\address[a]{Faculty of Computer Science and Cybernetics, Taras
Shevchenko National University of Kyiv, 01601 Kyiv, Ukraine}
\address[b]{Faculty of Mechanics and Mathematics, Taras Shevchenko
National University~of~Kyiv, 01601 Kyiv, Ukraine}

\markboth{A. Marynych, G. Verovkin}{A functional limit theorem for
random processes with immigration}


\begin{abstract}
Let $(X_k,\xi_k)_{k\in\N}$ be a sequence of independent copies of a
pair $(X,\xi)$ where $X$ is a random process with paths in the
Skorokhod space $D[0,\infty)$ and $\xi$ is a positive random variable.
The random process with immigration $(Y(u))_{u\in\R}$ is defined as the
a.s. finite sum $Y(u)=\sum_{k\geq0}X_{k+1}(u-\xi_1-\cdots-\xi_k)\1_{\{
\xi_1+\cdots+\xi_k\leq u\}}$.
We obtain a functional limit theorem for the process $(Y(ut))_{u\geq
0}$, as $t\to\infty$, when the law of $\xi$ belongs to the domain of
attraction of an $\alpha$-stable law with $\alpha\in(0,1)$, and the
process $X$ oscillates moderately around its mean $\E[X(t)]$. In this
situation the process $(Y(ut))_{u\geq0}$,
when scaled appropriately,
converges weakly in the Skorokhod space $D(0,\infty)$ to a fractionally
integrated inverse stable subordinator.
\end{abstract}

\begin{keywords}
\kwd{Fractionally integrated inverse stable subordinators}
\kwd{random process with immigration}
\kwd{shot noise process}
\end{keywords}

\begin{keywords}[2010]
\kwd{Primary 60F05}
\kwd{Secondary 60K05}
\end{keywords}


%
\received{31 January 2017}
%
\revised{26 March 2017}
\accepted{27 March 2017}
\publishedonline{12 April 2017}
\end{frontmatter}

\section{Introduction and main result}
Let $(X_k,\xi_k)_{k\in\N}$ be a sequence of independent copies of a
pair $(X,\xi)$ where $X$ is a random process with paths in $D[0,\infty
)$ and $\xi$ is a positive random variable. We impose no conditions on
the dependence structure of $(X,\xi)$.
Hereafter $\N_0$ denotes the set of non-negative integers $\{
0,1,2,\ldots\}$.

Let $(S_n)_{n\in\N_0}$ be a standard zero-delayed random walk:
\begin{equation}
\label{eq:random_walk_def} S_0:=0,\qquad S_n:=\xi_1+
\cdots+\xi_n,\quad n\in\N,
\end{equation}
and let $(\nu(t))_{t\in\R}$ be the corresponding first-passage time
process for $(S_n)_{n\in\N_0}$:
\[
\nu(t):=\inf\{k\in\N_0:S_k > t\},\quad t\in\R.
\]
The {\it random process with immigration} $Y=(Y(u))_{u\in\R}$ is
defined as a finite sum
\[
Y(u):=\sum_{k \geq0}X_{k+1}(u-S_k)
\1_{\{S_k\leq u\}}=\sum_{k=0}^{\nu
(u)-1}X_{k+1}(u-S_k),
\quad u\in\R.
\]
This family of random processes was introduced in \cite
{Iksanov+Marynych+Meiners:2017-1} as a generalization of several known
objects in applied probability including branching processes with
immigration 
(in case of $X$ being a branching process) and renewal shot noise
processes (in case of $X(t)=h(t)$ a.s. for some $h\in D[0,\infty)$).
The process $X$ is usually called {\it a response process}, or {\it a
response function} if $X(t)=h(t)$ a.s. for some deterministic function $h$.

The problem of weak convergence of random processes with immigration
was addressed in \cite
{Iksanov+Marynych+Meiners:2017-1,Iksanov+Marynych+Meiners:2017-2,Marynych:2015}
where the authors give a more or less complete picture of the weak
convergence of finite-dimensional distributions of $(Y(ut))_{u\geq0}$
or $(Y(u+t))_{u\in\R}$, as $t\to\infty$. The case of renewal shot noise
process has received much attention in the past years, see \cite
{Iksanov:2013,Iksanov+Kabluchko+Marynych:2016-1,Iksanov+Marynych+Meiners:2014,Iksanov+Kabluchko+Marynych+Shevchenko:2017}.
A comprehensive survey of the subject is given in Chapter 3 of the
recent book \cite{Iksanov_book:2016}.

A much more delicate question of weak convergence of $Y$ in functional
spaces, to the best of our knowledge, was only investigated either for
particular response processes, or in the simple case 
when $\xi$ is exponentially distributed. In the latter situation $Y$ is
called {\it a Poisson shot noise process}. In the list below $\eta$ is
a random variable which satisfies certain assumptions specified in the
corresponding papers:
\begin{itemize}
\item if $\xi$ has exponential distribution and either $X(t)=\1_{\{\eta
> t\}}$ or $X(t)=t\wedge\eta$, functional limit theorems for $Y$ were derived
in \cite{Resnick+Rootzen:2000};
\item if $X(t)=\1_{\{\eta> t\}}$ and $\E\xi<\infty$, a functional
limit theorem for $Y$ was established in \cite{Iksanov+Jedidi+Bouzeffour:2017+};

\item if $X(t)=\1_{\{\eta\leq t\}}$, functional limit theorems for $Y$
are given in \cite{Alsmeyer+Iksanov+Marynych:2016};

\item if $\xi$ has exponential distribution and $X(t)=\eta f(t)$ for
some deterministic function $f$, limit theorems for $Y$ were obtained
in \cite{Kluppelberg+Kuhn:2004};

\item in \cite{Iksanov+Marynych+Meiners:2017-2,Marynych:2015}
sufficient conditions for weak convergence of $(Y(u+t))_{u\in\R}$ to a
stationary process with immigration were found.
\end{itemize}

In this paper we treat the case where $\xi$ is heavy-tailed, more
precisely we assume that
\begin{equation}
\label{eq:tail_xi_reg_var} \mmp\{\xi>t\}\sim t^{-\alpha}\ell_{\xi}(t),\quad t\to
\infty,
\end{equation}
for some $\ell_{\xi}$ slowly varying at infinity, and $\alpha\in
(0,1)$. Assuming \eqref{eq:tail_xi_reg_var}, we obtain a functional
limit theorem for a quite general class of response processes. The
class of such processes can be described by a common property: they do
not ``oscillate to much'' around the mean $\E[X(t)]$, which itself
varies regularly with parameter $\rho>-\alpha$. Let us briefly outline
our approach based on ideas borrowed from \cite
{Iksanov+Marynych+Meiners:2017-1}. Put $h(t):=\E[X(t)]$ and
write\footnote{In what follows we always assume that $h$ exists and is
a c\`{a}dl\`{a}g function.}
\begin{equation}
\label{eq:decompose} Y(t)=\sum_{k\geq0} \bigl(X_{k+1}(t-S_k)-h(t-S_k)
\bigr)\1_{\{S_k\leq t\}
}+\sum_{k\geq0}h(t-S_k)
\1_{\{S_k\leq t\}}.
\end{equation}
We investigate the two summands in the right-hand side separately. The
second summand is a standard renewal shot noise process with response
function $h$. Under condition \eqref{eq:tail_xi_reg_var} and assuming that
\begin{equation}
\label{eq:h_reg_var} h(t)=\E\bigl[X(t)\bigr] \sim t^{\rho}\ell_{h}(t),
\quad t\to\infty,
\end{equation}
for some $\rho\in\R$ and a slowly varying function $\ell_{h}$, it was
proved in \cite[Theorem 2.9]{Iksanov+Marynych+Meiners:2014} and \cite
[Theorem 2.1]{Iksanov+Kabluchko+Marynych+Shevchenko:2017} that
\begin{equation}
\label{eq:shot_noise_fidi_conv} \biggl(\frac{\mmp\{\xi>t\}}{h(t)}\sum_{k\geq0}h(ut-S_k)
\1_{\{S_k\leq
ut\}} \biggr)_{u>0}\todistrfdd \xch{\bigl(J_{\alpha,\rho}(u)\bigr)_{u>0},}{\bigl(J_{\alpha,\rho}(u)\bigr)_{u>0}}\quad t\to\infty,
\end{equation}
where $J_{\alpha,\rho}=(J_{\alpha,\rho}(u))_{u\geq0}$ is a so-called
fractionally integrated inverse $\alpha$-stable subordinator. The
process $J_{\alpha,\rho}$ is defined as a pathwise Lebesgue--Stieltjes integral
\begin{equation}
\label{eq:FIISS_def} J_{\alpha,\rho}(u)=\int_{[0,\,u]}(u-y)^{\rho}{
\rm d}W_{\alpha
}^{\leftarrow}(y),\quad u\geq0.
\end{equation}
In this formula $W_{\alpha}^{\leftarrow}(y):=\inf\{t\geq0:W_{\alpha
}(t)>y\}$, $y\geq0$, is a generalized inverse of an $\alpha$-stable
subordinator $(W_{\alpha}(t))_{t\geq0}$ with the Laplace exponent
\[
-\log\E e^{-sW_{\alpha}(1)}=\varGamma(1-\alpha)s^{\alpha},\quad s\geq0.
\]
It is also known that convergence of finite-dimensional distributions
\eqref{eq:shot_noise_fidi_conv} can be\break strengthened to convergence in
the Skorokhod space $D(0,\infty)$ endowed with the $J_1$-topology if
$\rho>-\alpha$, see Theorem 2.1 in \cite
{Iksanov+Kabluchko+Marynych+Shevchenko:2017}. If $\rho\leq-\alpha$ the
process $(J_{\alpha,\rho}(u))_{u\geq0}$, being a.s. finite for every
fixed $u\geq0$, has a.s. locally unbounded trajectories, see
Proposition 2.5 in \cite{Iksanov+Kabluchko+Marynych+Shevchenko:2017}.

Turning to the first summand in \eqref{eq:decompose} we note that it is
the a.s. limit of a martingale $(R(j,t),\mathcal{F}_j)_{j\in\N}$, where
$\mathcal{F}_j:=\sigma((X_k,\xi_k):1\leq k\leq j)$ and
\[
R(j,t):=\sum_{k=0}^{j-1}
\bigl(X_{k+1}(t-S_k)-h(t-S_k) \bigr)
\1_{\{
S_k\leq t\}},\quad j\in\N.
\]
Applying the martingale central limit theory it is possible to show
that under appropriate assumptions (which are of no importance for this paper)
\[
\biggl(\sqrt{\frac{\mmp\{\xi>t\}}{v(t)}}\sum_{k\geq0}
\bigl(X_{k+1}(ut-S_k)-h(ut-S_k) \bigr)
\1_{\{S_k\leq ut\}} \biggr)_{u>0}\todistrfdd\bigl(Z(u)\bigr)_{u>0},
\]
as $t\to\infty$, for a non-trivial process $Z$, where $v(t):=\E
[(X(t)-h(t))^2]$ is the variance of $X$, see Proposition 2.2 in \cite
{Iksanov+Marynych+Meiners:2017-1}.

We are interested in situations when the second summand in \eqref
{eq:decompose} asymptotically dominates, more precisely we are looking
for conditions ensuring
\begin{equation}
\label{eq:negligible_func} \frac{\mmp\{\xi>t\}}{h(t)}\sup_{u\in[0,\,T]} \biggl\llvert \sum
_{k\geq0} \bigl(X_{k+1}(ut-S_k)-h(ut-S_k)
\bigr)\1_{\{S_k\leq ut\}} \biggr\rrvert \overset{\mmp } {\to} 0,\quad t\to\infty,
\end{equation}
for every fixed $T>0$. From what has been mentioned above it is clear
that this can happen only if
\begin{equation}
\label{eq:negligible_fd} \lim_{t\to\infty}\frac{\mmp\{\xi>t\}v(t)}{h^2(t)}=0.
\end{equation}
Restricting our attention to the case where $v$ is regularly varying
with index $\beta\in\R$, i.e.
\begin{equation}
\label{eq:v_reg_var} v(t)\sim t^{\beta}\ell_{v}(t),\quad t\to\infty,
\end{equation}
we see that \eqref{eq:negligible_fd} holds if $\beta<\alpha+2\rho$ and
fails if $\beta>\alpha+2\rho$. As long as we do not make any
assumptions on distributional or path-wise properties of $X$ such as
e.g., monotonicity, self-similarity or independence of increments, it
can be hardly expected that condition \eqref{eq:negligible_fd} alone is
sufficient for \eqref{eq:negligible_func}. Nevertheless, we will show
that \eqref{eq:negligible_func} holds true under additional assumptions
on the asymptotic behavior of higher centered moments $\E
[(X(t)-h(t))^{2l}]$, $l=1,2,\ldots$, and an additional technical
assumption. Our first main result treats the case where the moments of
the normalized process $([X(t)-h(t)]/v(t))_{t\geq0}$ are bounded
uniformly in $t\geq0$. Denote by $(\widehat{X}(t))_{t\geq0}$ the
centered process $(X(t)-h(t))_{t\geq0}$.

\begin{thm}\label{thm:main1}
Assume that for all $t\geq0$ and $l\in\N$ we have $\E[|X(t)|^l]<\infty
$. Further, assume that the following conditions are fulfilled:
\begin{itemize}
\item[(A1)] relation \eqref{eq:tail_xi_reg_var} holds for some $\alpha
\in(0,1)$;
\item[(A2)] relation \eqref{eq:h_reg_var} holds for some $\rho>-\alpha$;
\item[(A3)] relation \eqref{eq:v_reg_var} holds for some $\beta\in
(-\alpha,\alpha+2\rho)$;
\item[(A4)] there exists $\delta>0$ such that for every $l\in\N$ the
following two conditions hold:
\begin{equation}
\label{eq:thm1_main1} \E \bigl[\widehat{X}(t)^{2l} \bigr]\leq
C_lv^l(t),\quad t\geq0,
\end{equation}
and
\begin{equation}
\label{eq:thm1_main2} \E \Bigl[\sup_{y\in[0,\delta)}\big|\widehat{X}(t)-
\widehat{X}(t-y)\1_{\{
y\leq t\}}\big|^{l} \Bigr]\leq C_l
t^{l(\rho-\varepsilon)},\quad t\geq0,
\end{equation}
for some $C_l\in(0,\infty)$ and $\varepsilon>0$.
\end{itemize}
Then, as $t\to\infty$,
\begin{equation}
\label{flt_main} \biggl(\frac{\mmp\{\xi>t\}}{h(t)}\sum_{k\geq0}X_{k+1}(ut-S_k)
\1_{\{
S_k\leq ut\}} \biggr)_{u>0}\Rightarrow \bigl(J_{\alpha,\rho}(u)
\bigr)_{u>0},
\end{equation}
weakly on $D(0,\infty)$ endowed with the $J_1$-topology.
\end{thm}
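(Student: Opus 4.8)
The plan is to exploit the decomposition \eqref{eq:decompose}: after normalisation by $\mmp\{\xi>t\}/h(t)$, the renewal shot noise term will converge to $J_{\alpha,\rho}$, while the centered term will be shown asymptotically negligible, uniformly on compact $u$-sets. The second summand in \eqref{eq:decompose}, so rescaled, converges weakly on $D(0,\infty)$ (with the $J_1$-topology) to $J_{\alpha,\rho}$ by Theorem~2.1 of \cite{Iksanov+Kabluchko+Marynych+Shevchenko:2017}, which applies precisely because $\rho>-\alpha$ by (A2) and \eqref{eq:tail_xi_reg_var} holds by (A1). Writing
\[
G_t(u):=\frac{\mmp\{\xi>t\}}{h(t)}\sum_{k\geq0}\widehat{X}_{k+1}(ut-S_k)\1_{\{S_k\leq ut\}},
\]
the theorem then reduces to \eqref{eq:negligible_func}, i.e. $\sup_{u\in[0,\,T]}|G_t(u)|\overset{\mmp}{\to}0$ for every $T>0$. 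Granting this, \eqref{flt_main} follows from a converging-together argument: since the Skorokhod distance satisfies $d_{J_1}(f+g,f)\leq\|g\|_\infty$ (take the identity time-change), uniform smallness of $G_t$ does not disturb the $J_1$-limit of the dominant term.

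Next I would control $G_t$ in $L^{2l}$ pointwise in $u$. For fixed $s$ the sum $R(s):=\sum_{k\geq0}\widehat{X}_{k+1}(s-S_k)\1_{\{S_k\leq s\}}$ is the terminal value of the martingale $(R(j,s),\mathcal F_j)$, and because $X_{k+1}$ is independent of $\mathcal F_k$ with $\E[X_{k+1}(\cdot)]=h(\cdot)$, the conditional variance of the $k$-th increment is $\1_{\{S_k\leq s\}}v(s-S_k)$. Hence $\E[R(s)^2]=\int_{[0,\,s]}v(s-y)\,\dd U(y)$, where $U(y)=\sum_{k\geq0}\mmp\{S_k\leq y\}$ is the renewal function. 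Under (A1) one has $U(y)\asymp1/\mmp\{\xi>y\}$, and an Abelian theorem for the regularly varying $v$ of index $\beta>-\alpha$ (from (A3)) gives $\E[R(s)^2]\asymp v(s)/\mmp\{\xi>s\}$. Feeding this into $\E[G_t(u)^2]$ and using the regular variation of $h$, $v$ and $\mmp\{\xi>\cdot\}$ yields $\E[G_t(u)^2]\asymp u^{\alpha+\beta}\,\mmp\{\xi>t\}v(t)/h^2(t)\to0$ by \eqref{eq:negligible_fd}, which is exactly the content of $\beta<\alpha+2\rho$ in (A3). Invoking \eqref{eq:thm1_main1} for general $l$ together with the Burkholder--Davis--Gundy (or Rosenthal) inequality for $R(\cdot,s)$, and estimating the $l$-th moment of the conditional quadratic variation $\sum_k\1_{\{S_k\leq s\}}v(s-S_k)$ by renewal asymptotics, upgrades this to $\E[|G_t(u)|^{2l}]\leq C_l\,u^{(\alpha+\beta)l}\bigl(\mmp\{\xi>t\}v(t)/h^2(t)\bigr)^l$.

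To pass from pointwise to uniform smallness I would discretise $[0,T]$ by a grid of mesh $T/n$, with $n=n(t)$ a suitable power of $t$ chosen so that the corresponding window in the time variable $s=ut$ has length at most $\delta$. The maximum of $|G_t|$ over the grid is handled by a union bound, $\mmp\{\max_i|G_t(u_i)|>\varepsilon\}\leq\varepsilon^{-2l}\sum_i\E[|G_t(u_i)|^{2l}]$; since $\mmp\{\xi>t\}v(t)/h^2(t)$ decays like a fixed positive power of $t$, choosing $l$ large makes $n\cdot(\,\cdot\,)^l\to0$, which is why (A4) must hold for \emph{every} $l\in\N$. The oscillation of $R$ between consecutive grid points, over a window of length $\leq\delta$, is bounded by the change of the already-present summands plus the contribution of the renewal epochs $S_k$ falling into the window; both are dominated by sums of the modulus quantities $\sup_{y\in[0,\delta)}|\widehat{X}_{k+1}(\cdot)-\widehat{X}_{k+1}(\cdot-y)\1_{\{y\leq\cdot\}}|$, whose $l$-th moments are controlled by \eqref{eq:thm1_main2}. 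After rescaling, each such term carries a factor $(\mmp\{\xi>t\}/h(t))(ut)^{\rho-\varepsilon}\asymp t^{-\alpha-\varepsilon}\to0$, the crucial gain $\varepsilon>0$ coming from \eqref{eq:thm1_main2}, while the number of epochs in a window is controlled by the renewal measure of a short interval. A union bound over the $O(n)$ windows, with $l$ large, then gives \eqref{eq:negligible_func}.

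The main obstacle is this last step: converting finite-dimensional negligibility into uniform negligibility over the \emph{expanding} region $[0,Tt]$. The delicate interplay is between the renewal-theoretic count of epochs (and their contribution) in short windows and the oscillation bound \eqref{eq:thm1_main2}, which must be calibrated against the regular-variation scaling so that the $\varepsilon$-gain survives summation over $O(n)$ windows. By contrast, the pointwise moment estimate and the final converging-together argument are comparatively routine.
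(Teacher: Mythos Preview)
Your overall architecture matches the paper's: decompose via \eqref{eq:decompose}, dispatch the shot-noise piece by the cited functional result, and kill the centered piece $\widetilde Y$ uniformly in $u$; you also correctly identify BDG as the tool for the pointwise $2l$-th moment. The genuine difference is in how you pass from pointwise to uniform negligibility. You propose a $t$-dependent grid of size $n(t)\asymp t$ on $[0,T]$ and a union bound, paying a factor $n(t)$ that you absorb by taking $l$ large. The paper avoids the growing grid entirely: it first observes that, by regular variation of $h/\mmp\{\xi>\cdot\}$ with positive index, the uniform statement \eqref{eq:proof1} follows from the \emph{pointwise} almost-sure statement $\frac{\mmp\{\xi>t\}}{h(t)}|\widetilde Y(t)|\to 0$ a.s.\ (split $\sup_{u\in[0,T]}$ into $u\in[0,s/t]$ and $u\in[s/t,T]$ and send $t\to\infty$, then $s\to\infty$). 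This reduces everything to a single Borel--Cantelli argument along the fixed sequence $(n\delta)_{n\ge1}$, plus one oscillation estimate on the fixed windows $[n\delta,(n+1)\delta)$, also handled by Borel--Cantelli. The gain is that there is never an extra factor of $t$ to beat, so the choice of $l$ is governed only by the gap $\alpha+2\rho-\beta$ (respectively $\varepsilon$), not by the grid size.

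One point in your oscillation step deserves tightening. The fluctuation over a $\delta$-window is bounded not just by the epochs \emph{in} the window but by the full sum $\sum_{k\ge0}V_{k+1}(s-S_k)\1_{\{S_k\le s\}}$ with $V_{k+1}(t)=\sup_{y\in[0,\delta)}|\widehat X_{k+1}(t)-\widehat X_{k+1}(t-y)\1_{\{y\le t\}}|$; controlling this in $L^l$ is not a matter of ``(size of one term)$\times$(renewal count of a short interval)'' but requires a moment bound for a random process with immigration (the paper's Lemma~\ref{lem:moments_rpwi}(i), fed with $b(t)=C t^{\rho-\varepsilon}$), which indeed gives $\E[(\sum_k V_{k+1}(s-S_k)\1_{\{S_k\le s\}})^l]=O((s^{\rho-\varepsilon}/\mmp\{\xi>s\})^l)$. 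With that in hand your union-bound scheme goes through, but as written your heuristic for this term is the shakiest part of the sketch.
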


Our second main result is mainly applicable when the process $X$ is
almost surely bounded by some (deterministic) constant. We have the
following theorem.

\begin{thm}\label{thm:main2}
Assume that for all $t\geq0$ and $l\in\N$ we have $\E|X(t)|^l<\infty$
and conditions (A1), (A2) of Theorem \ref{thm:main1} are 
valid.
Further, suppose that for every $l\in\N$ there exists a constant
$C_l>0$ such that
\begin{equation}
\label{eq:thm2_main1} \E\bigl[\widehat{X}(t)^{2l}\bigr]=\E\bigl[
\bigl(X(t)-h(t)\bigr)^{2l}\bigr]\leq C_lh(t),\quad t\geq0,
\end{equation}
and for some $\delta>0$ the function $t\mapsto\E[\sup_{y\in[0,\delta
)}|\widehat{X}(t)-\widehat{X}(t-y)\1_{\{y\leq t\}}|^{l}]$ is either
directly Riemann integrable or locally bounded and
\begin{equation}
\label{eq:thm2_main2} \E\Bigl[\sup_{y\in[0,\delta)}\big|\widehat{X}(t)-
\widehat{X}(t-y)\1_{\{y\leq
t\}}\big|^{l}\Bigr]=O\bigl(\mmp\{\xi>t\}
\bigr),\quad t\to\infty.
\end{equation}
Then \eqref{flt_main} holds.
\end{thm}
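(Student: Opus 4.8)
\section*{Proof proposal for Theorem \ref{thm:main2}}

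The plan is to use the decomposition \eqref{eq:decompose} and reduce the assertion to a negligibility statement for the centered part. Since (A2) supplies $\rho>-\alpha$, Theorem 2.1 of \cite{Iksanov+Kabluchko+Marynych+Shevchenko:2017} upgrades the finite-dimensional convergence \eqref{eq:shot_noise_fidi_conv} of the renewal shot noise summand to weak convergence in $D(0,\infty)$ under the $J_1$-topology. Hence, by a Slutsky-type argument in $D(0,\infty)$, it suffices to prove that the centered process $\widehat{Y}(t):=\sum_{k\geq0}\widehat{X}_{k+1}(t-S_k)\1_{\{S_k\leq t\}}$ (with $\widehat{X}_{k+1}:=X_{k+1}-h$) satisfies the functional negligibility \eqref{eq:negligible_func}. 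The whole proof therefore concentrates on $\widehat{Y}$.

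First I would establish negligibility at a fixed time. Conditioning on the walk $(S_k)_{k\geq0}$, the summands of $\widehat{Y}(t)$ are independent and centered, so the cross terms vanish and
\[
\E\bigl[\widehat{Y}(t)^2\bigr]=\E\Bigl[\sum_{k\geq0}v(t-S_k)\1_{\{S_k\leq t\}}\Bigr].
\]
Using \eqref{eq:thm2_main1} with $l=1$ to bound $v\leq C_1h$ and invoking the key renewal theorem in the infinite-mean case (the renewal function $U$ is regularly varying of index $\alpha$, $U(t)\sim c\,t^{\alpha}/\ell_\xi(t)$), the same asymptotics that underlie \eqref{eq:shot_noise_fidi_conv} give $\E[\widehat{Y}(t)^2]=O(h(t)/\mmp\{\xi>t\})$. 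Multiplying by the square of the normalizing factor and using that $\mmp\{\xi>t\}/h(t)\sim t^{-\alpha-\rho}\ell_\xi(t)/\ell_h(t)\to0$ because $\rho>-\alpha$, one gets $(\mmp\{\xi>t\}/h(t))\widehat{Y}(t)\to0$ in $L^2$, hence in probability, for each fixed time; the same computation with the joint law of finitely many time points yields finite-dimensional negligibility.

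The main work, and the chief obstacle, is upgrading this to the uniform statement \eqref{eq:negligible_func}. Here the bounds \eqref{eq:thm2_main1} and \eqref{eq:thm2_main2} being imposed for every $l\in\N$ are used to feed a Kolmogorov–Chentsov/maximal chaining estimate. I would fix a fine grid of $[0,T]$, control the maximum of $|\widehat{Y}(u_it)|$ over grid points by combining the $L^{2l}$ analogue of the second-moment bound with a union bound, and then control the oscillation of $u\mapsto\widehat{Y}(ut)$ between consecutive grid points. The oscillation splits into (i) the \emph{shift} of the arguments $\widehat{X}_{k+1}(ut-S_k)$ as $u$ grows, which is exactly what the modulus condition \eqref{eq:thm2_main2} governs, and (ii) the \emph{entry} of new summands as renewal epochs $S_k$ are crossed. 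For part (i), summing the $l$-th moment modulus bound against the renewal measure and exploiting that its right-hand side is directly Riemann integrable (or locally bounded) and $O(\mmp\{\xi>t\})$ lets me apply the key renewal theorem once more; this is precisely the point at which Theorem \ref{thm:main2} diverges from Theorem \ref{thm:main1}, the relevant scaling function being $h$ rather than the power $v$. For part (ii), the number and magnitude of entering terms is handled via the regular variation of $\mmp\{\xi>\cdot\}$. Assembling these pieces, with $l$ chosen large enough relative to $\alpha$ and $\rho$, produces a modulus bound of the order required to force the normalized centered process to the zero limit, which gives \eqref{eq:negligible_func} and, combined with the first paragraph, the theorem.
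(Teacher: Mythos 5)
Your proposal is correct in substance and follows the same skeleton as the paper: the decomposition \eqref{eq:decompose}, the functional convergence of the pure shot-noise part via Theorem 2.1 of \cite{Iksanov+Kabluchko+Marynych+Shevchenko:2017}, and a reduction to the uniform negligibility \eqref{eq:negligible_func} of the centered part, proved by discretizing at spacing $\delta$ in \emph{absolute} time, using $2l$-th moments on the grid with $l(\alpha+\rho)>1$, and controlling the oscillation between grid points by the modulus condition \eqref{eq:thm2_main2} fed into the renewal-measure moment bounds (Lemma \ref{lem:moments_rpwi}). The one organizational difference is in how uniformity over $u\in[0,T]$ is obtained: you run a union bound over a $t$-dependent grid of $[0,T]$ of cardinality $O(t/\delta)$, whereas the paper first uses that $t\mapsto h(t)/\mmp\{\xi>t\}$ is regularly varying with \emph{positive} index $\rho+\alpha$ to dominate $\sup_{u\in[0,T]}$ by the tail behaviour of the normalized process, thereby reducing everything to the almost sure convergence \eqref{eq:proof2} along the single arithmetic sequence $(n\delta)_{n\in\N}$ via Borel--Cantelli; both routes end up requiring exactly the same summability $l(\alpha+\rho)>1$, so nothing is gained or lost, though the paper's reduction also disposes of the grid points near $u=0$ (where the asymptotic moment bounds do not apply and one must fall back on local boundedness) in one stroke. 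Two points you leave implicit deserve naming: no genuine Kolmogorov--Chentsov/multi-scale chaining is needed or possible here --- the modulus hypothesis only controls increments over windows of fixed length $\delta$, which forces a single-scale discretization; and the ``$L^{2l}$ analogue of the second-moment bound'' is not automatic, since for $l\geq 2$ the cross terms do not vanish --- the paper obtains $\E[\widetilde{Y}(t)^{2l}]=O((h(t)/\mmp\{\xi>t\})^{l})$ from the Burkholder--Davis--Gundy inequality (Theorem 11.3.2 in \cite{Chow+Teicher:1997}), whose two terms (the conditional square function and the maximal term) are precisely where hypothesis \eqref{eq:thm2_main1} for \emph{every} $l$ enters, combined with Lemma \ref{lem:moments}(i) applied to the response $h$.
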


Obviously, our results are far from being optimal and leave a lot of
space for improvements, yet they are applicable to several models given
in the next section.

\section{Applications}
\subsection{The number of busy servers in a \texorpdfstring{$G/G/\infty
$}{G/G/infinity} queue}
Consider a $G/G/\infty$ queue with customers arriving at
$0=S_0<S_1<S_2<\cdots$. Upon arrival each customer is served
immediately by one of infinitely many idle servers and let the service
time of the $k$th customer be $\eta_{k}$, a copy of a positive random
variable $\eta$. Put $X(t):=\1_{\{\eta>t\}}$, then the random process
with immigration
\[
Y(u)=\sum_{k\geq0}\1_{\{S_k\leq u<S_{k}+\eta_{k+1}\}},\quad u\geq0,
\]
represents the number of busy servers at time $u\geq0$. The process
$(Y(u))_{u\geq0}$ may also be interpreted as the difference between
the number of visits to $[0,t]$ of the standard
random walk $(S_k)_{k\geq0}$ and the perturbed random walk $(S_k+\eta
_{k+1})_{k\geq1}$, see \cite{Alsmeyer+Iksanov+Meiners:2015}, or as the
number of active sources in a communication network, see \cite
{Mikosch+Resnick:2006,Resnick+Rootzen:2000}. An introduction to renewal
theory for perturbed random walks can be found in \cite{Iksanov_book:2016}.

Assume that \eqref{eq:tail_xi_reg_var} holds and
\begin{equation}
\label{eq:eta_reg_var} \mmp\{\eta>t\}\sim t^{\rho}\ell_{\eta}(t),\quad t
\to\infty,
\end{equation}
for some $\rho\in(-\alpha,0]$ and $\ell_{\eta}$ slowly varying at
infinity. Note that
\[
h(t)=\mmp\{\eta>t\}\sim t^{\rho}\ell_{\eta}(t),\quad t\to\infty.
\]
Moreover, for every $l\in\N$ and every $\delta>0$,
\[
\E\bigl[\widehat{X}(t)^{2l}\bigr]=\mmp\{\eta>t\}\mmp\{\eta\leq t\}
\bigl(\mmp^{2l-1}\{ \eta>t\}+\mmp^{2l-1}\{\eta\leq t\}\bigr)\leq
h(t)
\]
and
\begin{align*}
\E&\Bigl[\sup_{y\in[0,\delta)}\big|\widehat{X}(t)-\widehat{X}(t-y)
\1_{\{y\leq
t\}}\big|^{l}\Bigr]\leq2^{l-1}\E\Bigl[\sup
_{y\in[0,\delta)}\big|\widehat{X}(t)-\widehat {X}(t-y)\1_{\{y\leq t\}}\big|\Bigr]
\\
&\leq2^{l}\mmp\{\eta>t\}\1_{\{t\leq\delta\}}+2^{l}\bigl(\mmp
\{\eta>t-\delta \}-\mmp\{\eta>t\}\bigr)\1_{\{t>\delta\}}.
\end{align*}
The function on the right-hand side is directly Riemann integrable.
Indeed, we have
\begin{align*}
&\hspace{-2mm}\sum_{n\geq1}\sup_{\delta n\leq y\leq\delta(n+1)}
\bigl(\mmp\{ \eta>y-\delta\}-\mmp\{\eta>y\}\bigr)
\\
&\leq\sum_{n\geq1}\bigl(\mmp\bigl\{\eta>(n-1)\delta
\bigr\}-\mmp\bigl\{\eta>(n+1)\delta\bigr\} \bigr)=\mmp\{\eta>0\}+\mmp\{\eta>\delta
\}\leq2,
\end{align*}
and the claim follows from the remark after the definition of direct
Riemann integrability given on p.~362 in \cite{Feller:1968}.

From Theorem \ref{thm:main2} we obtain the following result,
complementing Theorem 1.2 in \cite{Iksanov+Jedidi+Bouzeffour:2017+}
that treats the case $\E\xi<\infty$.
\begin{proposition}\label{ass:gg_queue}
Assume that $(\xi,\eta)$ is a random vector with positive components
such that \eqref{eq:tail_xi_reg_var} and \eqref{eq:eta_reg_var} hold
for $\alpha\in(0,1)$ and $\rho\in(-\alpha,0]$, respectively. Let $(\xi
_k,\eta_k)_{k\in\N}$ be a sequence of independent copies of $(\xi,\eta
)$ and $(S_k)_{k\in\N_0}$ be a random walk defined by \eqref
{eq:random_walk_def}. Then
\[
\biggl(\frac{\mmp\{\xi>t\}}{\mmp\{\eta>t\}}\sum_{k\geq0}\1_{\{S_k\leq
ut<S_k+\eta_{k+1}\}}
\biggr)_{u>0}\Rightarrow \bigl(J_{\alpha,\rho
}(u) \bigr)_{u>0},
\quad t\to\infty,
\]
weakly on $D(0,\infty)$ endowed with the $J_1$-topology.
\end{proposition}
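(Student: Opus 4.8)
The plan is to obtain Proposition~\ref{ass:gg_queue} as a direct corollary of Theorem~\ref{thm:main2} applied to the response process $X(t)=\1_{\{\eta>t\}}$. With this choice the associated random process with immigration is precisely $Y(u)=\sum_{k\geq0}\1_{\{S_k\leq u<S_k+\eta_{k+1}\}}$, the number of busy servers, and its mean function is $h(t)=\E[X(t)]=\mmp\{\eta>t\}$. Consequently the normalization $\mmp\{\xi>t\}/\mmp\{\eta>t\}$ in the statement coincides with $\mmp\{\xi>t\}/h(t)$ in \eqref{flt_main}, and the limit process $J_{\alpha,\rho}$ is the same. It therefore suffices to verify the hypotheses of Theorem~\ref{thm:main2}.

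The bookkeeping conditions are immediate. Since $X(t)\in\{0,1\}$ we have $\E[|X(t)|^l]=\mmp\{\eta>t\}\leq1<\infty$ for every $t\geq0$ and $l\in\N$. Condition (A1) is exactly assumption \eqref{eq:tail_xi_reg_var}, and condition (A2) follows from \eqref{eq:eta_reg_var}: as $h(t)=\mmp\{\eta>t\}\sim t^{\rho}\ell_{\eta}(t)$ with $\rho\in(-\alpha,0]$, relation \eqref{eq:h_reg_var} holds with the required index $\rho>-\alpha$. Note that, unlike Theorem~\ref{thm:main1}, Theorem~\ref{thm:main2} imposes no hypothesis of type (A3) on the variance, which is convenient here since $X$ is bounded.

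The two substantive conditions are handled by the computations preceding the statement. For the moment bound, writing $p=\mmp\{\eta>t\}$ gives $\widehat{X}(t)=\1_{\{\eta>t\}}-p$ and hence $\E[\widehat{X}(t)^{2l}]=p(1-p)^{2l}+(1-p)p^{2l}=p(1-p)\bigl((1-p)^{2l-1}+p^{2l-1}\bigr)\leq p=h(t)$, so \eqref{eq:thm2_main1} holds with $C_l=1$. For the increment condition I would bound $\sup_{y\in[0,\delta)}|\widehat{X}(t)-\widehat{X}(t-y)\1_{\{y\leq t\}}|^l$ by $2^{l-1}$ times its first power and then by a majorant built from the tail differences $\mmp\{\eta>t-\delta\}-\mmp\{\eta>t\}$, reducing everything to the direct Riemann integrability of that majorant. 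This is the one genuinely technical step, and I expect it to be the main obstacle: it is settled by the criterion on p.~362 of \cite{Feller:1968}, checking that the sum over the grid $[\delta n,\delta(n+1)]$ of the suprema of $\mmp\{\eta>y-\delta\}-\mmp\{\eta>y\}$ telescopes and stays bounded by $2$. Crucially this uses only monotonicity of the tail, not the precise form of its regular variation, and one must handle the boundary region $t\leq\delta$ separately. Once \eqref{eq:thm2_main2} is secured through this directly-Riemann-integrable alternative, Theorem~\ref{thm:main2} applies verbatim and delivers the asserted weak convergence in $D(0,\infty)$ under the $J_1$-topology.
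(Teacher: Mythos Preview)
Your proposal is correct and follows the paper's own argument essentially verbatim: apply Theorem~\ref{thm:main2} with $X(t)=\1_{\{\eta>t\}}$, verify (A1)--(A2) and the moment bound \eqref{eq:thm2_main1} via the Bernoulli computation, and check the increment condition by showing the majorant $2^{l}\mmp\{\eta>t\}\1_{\{t\leq\delta\}}+2^{l}(\mmp\{\eta>t-\delta\}-\mmp\{\eta>t\})\1_{\{t>\delta\}}$ is directly Riemann integrable through the telescoping-sum criterion from \cite{Feller:1968}. One wording quibble: in Theorem~\ref{thm:main2} direct Riemann integrability is the \emph{alternative} to \eqref{eq:thm2_main2}, not a way of securing it, so your last sentence should simply say that the dRI branch of the hypothesis is verified.
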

\begin{remark}
We do not assume independence of $\xi$ and $\eta$.
\end{remark}

\subsection{Shot noise processes with a random amplitude}
Assume that $X(t)=\eta f(t)$, where $\eta$ is a non-degenerate random
variable and $f:[0,\infty)\to\R$ is a fixed c\`{a}dl\`{a}g function.
The corresponding random process with immigration
\[
Y(t)=\sum_{k\geq0}\eta_{k+1}f(t-S_k)
\1_{\{S_k\leq t\}},\quad t\geq0,
\]
where $(\eta_k)_{k\in\N}$ is a sequence of independent copies of $\eta
$, may be interpreted as a renewal shot noise process in which the
common response function $f$ is scaled at a shot $S_k$ by a random
factor $\eta_{k+1}$. In case where $(\xi_k)_{k\in\N}$ have exponential
distribution and are independent of $(\eta_k)_{k\in\N}$ such processes
were used in mathematical finance as a model of stock prices with
long-range dependence in asset returns, see \cite{Kluppelberg+Kuhn:2004}.

Note that if $\E|\eta|^{l}<\infty$ for all $l\in\N$, then
\begin{align*}
&h(t)=(\E\eta)f(t),\qquad v(t)={\rm Var}(\eta)f^2(t),
\\
&\E\bigl[\bigl(X(t)-h(t)\bigr)^{2l}\bigr]=\E\bigl[(\eta-\E
\eta)^{2l}\bigr]f^{2l}(t)\leq C_l
v^l(t),\quad l\in\N,
\end{align*}
for some $C_l>0$. Assume now that $f$ varies regularly with index $\rho
>-\alpha$ and additionally satisfies
\begin{equation}
\label{eq:reg_var_f_add_ass} \sup_{y\in[0,\delta)} \bigl\llvert f(t)-f(t-y) \bigr\rrvert
= O\bigl(t^{\rho-\varepsilon
}\bigr),\quad t\to\infty,
\end{equation}
for some $\delta>0$ and $\varepsilon>0$. Then
\begin{align*}
\E\Bigl[\sup_{y\in[0,\delta)}\big|\widehat{X}(t)-\widehat{X}(t-y)
\1_{\{y\leq t\}
}\big|^{l}\Bigr]&=\E|\eta-\E\eta|^{l}\sup
_{y\in[0,\delta)}\big|f(t)-f(t-y)\1_{\{
y\leq t\}}\big|^{l}
\\
&=O\bigl(t^{l(\rho-\varepsilon)}\bigr),\quad t\to\infty.
\end{align*}
Hence, all assumptions of Theorem \ref{thm:main1} hold (if $\E\eta<0$,
Theorem \ref{thm:main1} is applicable to the process $-X$) and we have
the following result.
\begin{proposition}\label{ass:mult_shot_noise}
Assume that $\E|\eta|^{l}<\infty$ for all $n\in\N$, $\E\eta\neq0$ and
\eqref{eq:tail_xi_reg_var} holds. If $f:[0,\infty)\to\R$ satisfies
\[
f(t)\sim t^{\rho}\ell_f(t),\quad t\to\infty,
\]
for some $\rho>-\alpha$ and $\ell_f$ slowly varying at infinity, and
\eqref{eq:reg_var_f_add_ass} holds, then
\[
\biggl(\frac{\mmp\{\xi>t\}}{f(t)\E\eta}\sum_{k\geq0}\eta
_{k+1}f(ut-S_k)\1_{\{S_k\leq ut\}} \biggr)_{u>0}
\Rightarrow \bigl( J_{\alpha,\rho}(u) \bigr)_{u>0},\quad t\to\infty,
\]
weakly on $D(0,\infty)$ endowed with the $J_1$-topology.
\end{proposition}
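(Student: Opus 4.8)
The plan is to verify that the hypotheses of Theorem~\ref{thm:main1} are met by the response process $X(t)=\eta f(t)$ and then simply invoke that theorem. Thus the bulk of the argument is the computation of the mean, variance, and the two moment bounds (A4), after which no further probabilistic work is required. First I would record that $h(t)=\E[X(t)]=(\E\eta)f(t)$, so that assumption (A2) follows immediately from the regular variation of $f$ with index $\rho>-\alpha$ together with $\E\eta\neq0$; here one uses that $t^{\rho}\ell_f(t)$ multiplied by the nonzero constant $\E\eta$ is again regularly varying with the same index $\rho$. Similarly $v(t)=\mathrm{Var}(\eta)f^2(t)$ is regularly varying with index $2\rho$, and since $\eta$ is non-degenerate we have $\mathrm{Var}(\eta)>0$; one then checks that $\beta=2\rho$ lies in the admissible interval $(-\alpha,\alpha+2\rho)$, which holds precisely because $\rho>-\alpha$ forces $2\rho>-2\alpha$, hence $2\rho>-\alpha$ when $\alpha\in(0,1)$, giving (A3).

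Next I would establish (A4). For the first bound \eqref{eq:thm1_main1}, I would write $\widehat{X}(t)=(\eta-\E\eta)f(t)$, so that $\E[\widehat{X}(t)^{2l}]=\E[(\eta-\E\eta)^{2l}]\,f^{2l}(t)$. Since all moments of $\eta$ are finite by hypothesis, $\E[(\eta-\E\eta)^{2l}]$ is a finite constant, and because $v^l(t)=(\mathrm{Var}(\eta))^l f^{2l}(t)$, the ratio of the two sides is the constant $C_l=\E[(\eta-\E\eta)^{2l}]/(\mathrm{Var}(\eta))^l$, which is finite and positive; this gives \eqref{eq:thm1_main1}. For the second bound \eqref{eq:thm1_main2}, I would factor out the random amplitude: $\widehat{X}(t)-\widehat{X}(t-y)\1_{\{y\le t\}}=(\eta-\E\eta)\,(f(t)-f(t-y)\1_{\{y\le t\}})$, whence
\[
\E\Bigl[\sup_{y\in[0,\delta)}\big|\widehat{X}(t)-\widehat{X}(t-y)\1_{\{y\le t\}}\big|^{l}\Bigr]=\E|\eta-\E\eta|^{l}\,\sup_{y\in[0,\delta)}\big|f(t)-f(t-y)\1_{\{y\le t\}}\big|^{l}.
\]
The factor $\E|\eta-\E\eta|^{l}$ is again a finite constant, and the supremum term is controlled by the additional assumption \eqref{eq:reg_var_f_add_ass}, which gives $\sup_{y\in[0,\delta)}|f(t)-f(t-y)|=O(t^{\rho-\varepsilon})$; raising to the power $l$ yields $O(t^{l(\rho-\varepsilon)})$, matching the required form of \eqref{eq:thm1_main2} with the same $\varepsilon$.

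With all four structural assumptions verified, Theorem~\ref{thm:main1} applies directly and delivers the stated weak convergence on $D(0,\infty)$ with the scaling constant $h(t)=(\E\eta)f(t)$ appearing in the denominator, which is exactly the normalization in the proposition. The one mild subtlety, and the only place requiring a comment rather than a routine calculation, is the sign of $\E\eta$: if $\E\eta<0$ the normalizing constant is negative, so I would note (as the text already anticipates) that one applies the theorem to the process $-X$, for which the mean is $-(\E\eta)f(t)>0$ and the centered moments are unchanged, and then the limit for $Y$ follows by multiplying by $-1$. I do not expect any genuine obstacle here: since $X$ is a deterministic function times a scalar random variable, the process $X$ inherits its entire regularity and moment behavior from $f$ and from the moments of $\eta$, so all the delicate path-oscillation control needed by the general theorem reduces to the single deterministic condition \eqref{eq:reg_var_f_add_ass} on $f$.
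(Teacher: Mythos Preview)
Your approach is exactly the paper's: identify $h(t)=(\E\eta)f(t)$ and $v(t)=\mathrm{Var}(\eta)f^2(t)$, verify (A1)--(A4), and invoke Theorem~\ref{thm:main1}, passing to $-X$ if $\E\eta<0$. The checks of (A2) and of both bounds in (A4) match the paper line for line.

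Your verification of (A3), however, contains a slip. With $\beta=2\rho$ the upper bound $\beta<\alpha+2\rho$ is automatic, but the lower bound requires $2\rho>-\alpha$, i.e.\ $\rho>-\alpha/2$, which is strictly stronger than the hypothesis $\rho>-\alpha$. Your chain ``$\rho>-\alpha$ forces $2\rho>-2\alpha$, hence $2\rho>-\alpha$ when $\alpha\in(0,1)$'' is invalid because $-2\alpha<-\alpha$; for instance $\alpha=1/2$, $\rho=-2/5$ gives $\rho>-\alpha$ yet $2\rho=-4/5<-\alpha$. The paper itself skates over this, simply asserting that ``all assumptions of Theorem~\ref{thm:main1} hold'', so the same gap is present there. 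The conclusion is nevertheless correct on the full range $\rho>-\alpha$: when $2\rho<-\alpha$ one has $v(t)=O(\mmp\{\xi>t\})$, and in the proof of Theorem~\ref{thm:main1} the required bound on $\E\bigl[\bigl(\sum_{k\ge0} v(t-S_k)\1_{\{S_k\le t\}}\bigr)^l\bigr]$ can be supplied by Lemma~\ref{lem:moments}(iii) (giving $O(1)$) in place of Lemma~\ref{lem:moments}(i), after which the remainder of that proof goes through unchanged.
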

This result complements the convergence of finite-dimensional
distributions provided by Example 3.3 in \cite{Iksanov+Marynych+Meiners:2017-1}.
\begin{remark}
In general, condition \eqref{eq:reg_var_f_add_ass} might not hold for a
function $f$ which is regularly varying with index $\rho\in\R$. 
Take, for example,
\[
f(t)=1+\frac{(-1)^{[t]}}{\log[t]}\1_{\{t>1\}}.
\]
Then, $f$ is regularly varying with index $\rho=0$, but for every
$\delta>0$ and large $n\in\N$ we have
\[
\sup_{y\in[0,\delta)}\big|f(2n)-f(2n-y)\big|\geq\sup_{y\in[0,\delta\wedge
1)}\big|f(2n)-f(2n-y)\big|
\geq\frac{2}{\log(2n)}.
\]
Hence, \eqref{eq:reg_var_f_add_ass} does not hold. On the other hand,
if $f$ is differentiable with an eventually monotone derivative
$f^{\prime}$, then \eqref{eq:reg_var_f_add_ass} holds by the mean value
theorem for differentiable functions and Theorem 1.7.2 in \cite
{Bingham+Goldie+Teugels:1987}.
\end{remark}

\section{Proof of Theorems \ref{thm:main1} and \ref{thm:main2}}
The proofs of Theorems \ref{thm:main1} and \ref{thm:main2} rely on the
same ideas, so we will prove them simultaneously. Pick $\delta>0$ such
that all assumptions of Theorem \ref{thm:main1} or Theorem \ref
{thm:main2} hold. This $\delta>0$ remains fixed throughout the proof.

In view of assumptions (A1) and (A2) and the fact that $h$ is c\`{a}dl\`
{a}g we infer from Theorem 2.1 in \cite
{Iksanov+Kabluchko+Marynych+Shevchenko:2017} that
\begin{equation}
\label{eq:shot_noise_func_conv} \biggl(\frac{\mmp\{\xi>t\}}{h(t)}\sum_{k\geq0}h(ut-S_k)
\1_{\{S_k\leq
ut\}} \biggr)_{u>0}\Rightarrow\bigl(J_{\alpha,\rho}(u)
\bigr)_{u>0}\quad t\to \infty,
\end{equation}
weakly on $D(0,\infty)$ endowed with the $J_1$-topology. Note that in
Theorem 2.1 of 
\cite{Iksanov+Kabluchko+Marynych+Shevchenko:2017} $h$ is assumed
monotone (or eventually monotone). However, this assumption is
redundant. The only places which have to be adjusted in the proofs are
two displays on p.~90, where $h(0)$ should be replaced by $\sup_{y\in
[0,c]}h(y)$.

Hence, from \eqref{eq:decompose} we see that it is enough to check, for
every fixed $T>0$, that
\begin{equation}
\label{eq:proof1} \frac{\mmp\{\xi>t\}}{h(t)}\sup_{u\in[0,T]}\big|\widetilde{Y}(ut)\big|
\overset {\mmp} {\to} 0,\quad t\to\infty,
\end{equation}
where $\widetilde{Y}(t):=\sum_{k\geq0}
(X_{k+1}(t-S_k)-h(t-S_k) )\1_{\{S_k\leq t\}}$ for $t\geq0$.
Moreover, it suffices to show that
\begin{equation}
\label{eq:proof2} \frac{\mmp\{\xi>t\}}{h(t)}\big|\widetilde{Y}(t)\big|\overset{a.s.} {\to} 0,\quad t
\to\infty.
\end{equation}
Indeed, for every fixed $s>0$,
\begin{align*}
&\frac{\mmp\{\xi>t\}}{h(t)}\sup_{u\in[0,T]}\big|\widetilde{Y}(ut)\big|\\
&\quad\leq
\frac
{\mmp\{\xi>t\}}{h(t)}\sup_{u\in[0,s]}\big|\widetilde{Y}(u)\big|+
\frac{\mmp\{\xi
>t\}}{h(t)}\sup_{u\in[s,Tt]}\big|\widetilde{Y}(u)\big|
\\
&\quad\leq\frac{\mmp\{\xi>t\}}{h(t)}\sup_{u\in[0,s]}\big|\widetilde{Y}(u)\big|+\frac{\mmp\{\xi>t\}}{h(t)}\sup_{u\in[s,Tt]}\frac
{h(u)}{\mmp\{\xi>u\}}
\sup_{u\in[s,Tt]} \biggl\llvert \frac{\mmp\{\xi>u\}
}{h(u)}\widetilde{Y}(u)
\biggr\rrvert .
\end{align*}
Since $t\mapsto h(t)/\mmp\{\xi>t\}$ is regularly varying with positive
index $\rho+\alpha$,
\[
\sup_{u\in[s,Tt]}\frac{h(u)}{\mmp\{\xi>u\}} \sim\frac{h(Tt)}{\mmp\{\xi
>Tt\}}\sim
T^{\rho+\alpha}\frac{h(t)}{\mmp\{\xi>t\}},\quad t\to\infty.
\]
Sending
$t\to\infty$ we obtain, for every fixed $s>0$,
\[
\limsup_{t\to\infty}\frac{\mmp\{\xi>t\}}{h(t)}\sup_{u\in
[0,T]}\big|
\widetilde{Y}(ut)\big|\leq T^{\rho+\alpha}\sup_{u\in[s,\infty)} \biggl\llvert
\frac{\mmp\{\xi>u\}}{h(u)}\widetilde{Y}(u) \biggr\rrvert .
\]
Sending
now $s\to\infty$ shows that \eqref{eq:proof2} implies \eqref
{eq:proof1}. Let us first check that \eqref{eq:proof2} holds along the
arithmetic sequence $(n\delta)_{n\in\N}$. According to the
Borel--Cantelli lemma and Markov's inequality it suffices to check that
for some $l\in\N$
\begin{equation}
\label{eq:bc_integres} \sum_{n=1}^{\infty} \biggl(
\frac{\mmp\{\xi>n\delta\}}{h(\delta n)} \biggr)^{2l}\E\bigl[\widetilde{Y}(\delta
n)^{2l}\bigr]<\infty.
\end{equation}
To check \eqref{eq:bc_integres} we apply the Burkholder--Davis--Gundy
inequality in the form given in Theorem 11.3.2 of 
\cite{Chow+Teicher:1997}, to obtain
\begin{align}
\label{eq:bdg_inequality} \E\bigl[\widetilde{Y}(t)^{2l}\bigr]&\leq K_l
\E \biggl[ \biggl(\sum_{k\geq0}\E \bigl(
\widehat{X}^2_{k+1}(t-S_k)\1_{\{S_k\leq t\}}|
\mathcal{F}_k \bigr) \biggr)^l \biggr]\notag
\\
&\quad+K_l\E \Bigl[\sup_{k\geq0} \bigl(
\widehat{X}^{2l}_{k+1}(t-S_k)\1_{\{
S_k\leq t\}}
\bigr) \Bigr],
\end{align}
for some constant $K_l>0$, where we recall the notation $\mathcal
{F}_k=\sigma((X_j,\xi_j):\break1\leq j\leq k)$.

\begin{proof}[Proof of (\ref{eq:bc_integres}) under assumptions of Theorem \ref
{thm:main1}] Using assumption (A4) we infer from \eqref{eq:bdg_inequality}:
\begin{align}
\E&\bigl[\widetilde{Y}(t)^{2l}\bigr]
\nonumber
\\
&\leq K_l \E \biggl[ \biggl(\sum_{k\geq0}v(t-S_k)
\1_{\{S_k\leq t\}} \biggr)^l \biggr]+K_l\E \biggl[\sum
_{k\geq0}\widehat{X}_{k+1}^{2l}(t-S_k)
\1_{\{
S_k\leq t\}} \biggr]
\nonumber
\\
&\leq K_l \E \biggl[ \biggl(\sum_{k\geq0}v(t-S_k)
\1_{\{S_k\leq t\}} \biggr)^l \biggr]+K_lC_l\E
\biggl[\sum_{k\geq0}v^l(t-S_k)
\1_{\{S_k\leq t\}
} \biggr]\label{eq:bdg_inequality1}.
\end{align}
If $\beta\geq0$, then $t\mapsto v^l(t)$ varies regularly with
non-negative index $l\beta$. Therefore, Lemma \ref{lem:moments}(i) yields
\begin{equation*}
\E \biggl(\sum_{k\geq0}v^l(t-S_k)
\1_{\{S_k\leq t\}} \biggr)=O \biggl(\frac
{v^l(t)}{\mmp\{\xi>t\}} \biggr),\quad t\to\infty.
\end{equation*}
If $\beta\in(-\alpha,0)$, pick $l\in\N$ such that $l\beta<-\alpha$.
Then $v^l(t)=O(\mmp\{\xi>t\})$, as $t\to\infty$, and Lemma \ref
{lem:moments}(iii) yields
\begin{equation*}
\E \biggl[\sum_{k\geq0}v^l(t-S_k)
\1_{\{S_k\leq t\}} \biggr]=O(1),\quad t\to\infty.
\end{equation*}
Hence, in any case
\begin{equation}
\label{eq:proof_aux1} \E \biggl[\sum_{k\geq0}v^l(t-S_k)
\1_{\{S_k\leq t\}} \biggr]=O \biggl(\frac
{v^l(t)}{\mmp\{\xi>t\}} \biggr)+O(1),\quad t\to
\infty.
\end{equation}
To bound the first summand in \eqref{eq:bdg_inequality1} apply Lemma
\ref{lem:moments}(i) to obtain
\[
\E \biggl[ \biggl(\sum_{k\geq0}v(t-S_k)
\1_{\{S_k\leq t\}} \biggr)^l \biggr]=O \biggl( \biggl(
\frac{v(t)}{\mmp\{\xi>t\}} \biggr)^l \biggr),\quad t\to \infty.
\]
Combining this estimate with \eqref{eq:proof_aux1}, we see that \eqref
{eq:bc_integres} holds if we pick $l>(2\rho+\alpha-\beta)^{-1}$. This
proves \eqref{eq:bc_integres} under assumptions of Theorem \ref{thm:main1}.
\end{proof}

\begin{proof}[Proof of (\ref{eq:bc_integres}) under assumptions of Theorem \ref
{thm:main2}] From \eqref{eq:bdg_inequality} and using \eqref
{eq:thm2_main1} we have
\begin{align*}
\E\bigl[\widetilde{Y}(t)^{2l}\bigr]\leq K_l
C_1^{l}\E \biggl[ \biggl(\sum
_{k\geq
0}h(t-S_k)\1_{\{S_k\leq t\}}
\biggr)^l \biggr]+K_lC_l\E \biggl[\sum_{k\geq0}h(t-S_k)
\1_{\{S_k\leq t\}} \biggr].
\end{align*}
\xch{Lemma \ref{lem:moments}(i)}{Lemma \eqref{lem:moments}(i)} gives 
us the estimate
\[
\E\bigl[\widetilde{Y}(t)^{2l}\bigr]=O \biggl( \biggl(
\frac{h(t)}{\mmp\{\xi>t\}} \biggr)^{l} \biggr),\quad t\to\infty.
\]
Therefore, \eqref{eq:bc_integres} holds if we choose $l\in\N$ such that
$l(\alpha+\rho)>1$. This proves \eqref{eq:bc_integres} under the
assumptions of
Theorem \ref{thm:main2}.

It remains to show that
\begin{align*}
\frac{\mmp\{\xi>n\delta\}}{h(n\delta)}\sup_{t\in[n\delta,(n+1)\delta
)} \bigg|&\sum
_{k\geq0}\bigl(\widehat{X}_{k+1}\bigl((n+1)
\delta-S_k\bigr)\1_{\{S_k\leq
(n+1)\delta\}}
\\
&-\widehat{X}_{k+1}(t-S_k)\1_{\{S_k\leq t\}}\bigr) \bigg|
\overset{a.s.} {\to} 0,
\end{align*}
as $n\to\infty$, which in turn is an obvious consequence of regular
variation of $t\mapsto\mmp\{\xi>t\}/h(t)$ and
\begin{equation}
\label{eq:proof_aux3} \frac{\mmp\{\xi>n\}}{h(n)}\sum_{k\geq0}V_{k+1}(n
\delta-S_k)\1_{\{
S_k\leq n\delta\}}\overset{a.s.} {\to} 0,\quad n\to\infty,
\end{equation}
where $V_{k+1}(t):=\sup_{y\in[0,\delta)}|\widehat{X}_{k+1}(t)-\widehat
{X}_{k+1}(t-y)\1_{\{y\leq t\}}|$.
\end{proof}

\begin{proof}[Proof of (\ref{eq:proof_aux3}) under assumptions of Theorem \ref
{thm:main1}] Applying Lemma \ref{lem:moments_rpwi}(i) with
$b(t)=t^{\rho-\varepsilon}$ and appropriate $\varepsilon>0$ we obtain
from (A5) that
\[
\E \biggl[ \biggl(\sum_{k\geq0}V_{k+1}(t-S_k)
\1_{\{S_k\leq t\}} \biggr)^{l} \biggr]=O \biggl( \biggl(
\frac{t^{\rho-\varepsilon}}{\mmp\{\xi>t\}
} \biggr)^l \biggr),\quad t\to\infty.
\]
Hence \eqref{eq:proof_aux3} holds in view of the Borel--Cantelli lemma
and Markov's inequality, since
\begin{align*}
\sum_{n=1}^{\infty}\mmp \biggl\{
\frac{\mmp\{\xi>n\}}{h(n)}\sum_{k\geq
0}V_{k+1}(n
\delta-S_k)\1_{\{S_k\leq n\delta\}}>\varepsilon \biggr\}
\leq\widehat{C}\sum_{n=1}^{\infty}
\bigl(n^{\rho-\varepsilon
}{h(n)} \bigr)^{l}<\infty,
\end{align*}
for all $l\in\N$ such that $\varepsilon l>1$ and some $\widehat
{C}=\widehat{C}_l>0$.
\end{proof}

\begin{proof}[Proof of (\ref{eq:proof_aux3}) under assumptions of Theorem \ref
{thm:main1}] If the function
\[
t\mapsto\E \Bigl[ \Bigl(\sup_{y\in[0,\delta)}\big|\widehat
{X}_{k+1}(t)-\widehat{X}_{k+1}(t-y)\1_{\{y\leq t\}}\big|
\Bigr)^{l} \Bigr]
\]
is directly Riemann integrable, then
\[
\E \biggl[ \biggl(\sum_{k\geq0}V_{k+1}(t-S_k)
\1_{\{S_k\leq t\}} \biggr)^{l} \biggr]=o(1),\quad t\to\infty
\]
by Lemma \ref{lem:moments_rpwi}(ii). Hence \eqref{eq:proof_aux3} holds
by the same reasoning as above after applying the Borel--Cantelli
lemma. If \eqref{eq:thm2_main2} holds, then the last centered formula
also holds with $O(1)$ in the right-hand side by Lemma \ref
{lem:moments_rpwi}(iii), whence \eqref{eq:proof_aux3}. This finishes
the proofs of Theorems \ref{thm:main1} and \ref{thm:main2}.
\end{proof}

\section*{Acknowledgments}
The work of A.~Marynych was supported by the Alexander von Humboldt
Foundation. We thank two anonymous referees for careful reading,
valuable comments and corrections of our numerous blunders.

\appendix

\section{Appendix}

\subsection{Moment convergence for renewal shot noise process}
\begin{lemma}\label{lem:moments}
Let $f:[0,\infty)\to\R$ be a locally bounded measurable function and
suppose that relation \eqref{eq:tail_xi_reg_var} holds for some $\alpha
\in(0,1)$.
\begin{itemize}
\item[(i)] Assume that
\[
f(t)\sim t^{\rho}\ell_{f}(t),\quad t\to\infty,
\]
for some $\rho>-\alpha$ and $\ell_f$ slowly varying at infinity. Let
$(J_{\alpha,\rho}(u))_{u\geq0}$ be a fractionally integrated inverse
stable subordinator defined in \eqref{eq:FIISS_def} (and below). Then,
for every $l\in\N$,
\begin{align}
\label{eq:moment_convergence} &\lim_{t\to\infty}\E \biggl[ \biggl(\frac{\mmp\{\xi>t\}}{f(t)}
\sum_{k\geq
0}f(t-S_k)\1_{\{S_k\leq t\}}
\biggr)^l \biggr] \notag\\
&\quad= \E \bigl(J_{\alpha,\rho
}(u) \bigr)^l
\notag\\
&\quad=\frac{l!}{(\varGamma(1-\alpha))^l}\prod_{j=1}^{l}
\frac{\varGamma(1+\rho
+(j-1)(\alpha+\rho))}{\varGamma(j(\alpha+\rho)+1)}.
\end{align}
\item[(ii)] If $f$ is directly Riemann integrable, then, for every $l\in
\N$,
\[
\E \biggl[ \biggl(\sum_{k\geq0}f(t-S_k)
\1_{\{S_k\leq t\}} \biggr)^l \biggr]=o(1),\quad t\to\infty.
\]
\item[(iii)] If $f(t)=O(\mmp\{\xi>t\})$, as $t\to\infty$, then, for
every $l\in\N$,
\[
\E \biggl[ \biggl(\sum_{k\geq0}f(t-S_k)
\1_{\{S_k\leq t\}} \biggr)^l \biggr]=O(1),\quad t\to\infty.
\]
\end{itemize}
\end{lemma}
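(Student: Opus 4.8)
The plan is to establish (i) by the method of moments, computing the limit of the normalized $l$-th moment directly and identifying it with $\E[J_{\alpha,\rho}(1)^l]$. First I would record the infinite-mean renewal asymptotics: writing $U(\dd s)=\sum_{k\geq0}\mmp\{S_k\in\dd s\}$ for the renewal measure, the Tauberian theorem applied to $1-\E e^{-s\xi}\sim\varGamma(1-\alpha)s^{\alpha}\ell_{\xi}(1/s)$ gives $U(t)\sim t^{\alpha}/(\varGamma(1-\alpha)\varGamma(1+\alpha)\ell_{\xi}(t))$, so that $\mmp\{\xi>t\}\,U(t\,\cdot)$ rescales vaguely to the measure on $[0,1]$ with density $z^{\alpha-1}/(\varGamma(1-\alpha)\varGamma(\alpha))$. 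Expanding the $l$-th power produces an $l$-fold sum over indices $k_1,\dots,k_l$; grouping these by which indices coincide and using the strong Markov property of $(S_n)$, the fully distinct part equals $l!$ times the integral of $\prod_i f(t-y_i)$ against the product measure $U(\dd y_1)\,\widetilde U(\dd(y_2-y_1))\cdots\widetilde U(\dd(y_l-y_{l-1}))$ over $0\le y_1\le\cdots\le y_l\le t$, where $\widetilde U=U-\delta_0$.

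Next I would rescale $y_i=t z_i$ and pass to the limit. Since $f(t-y_i)\sim(t(1-z_i))^{\rho}\ell_f(t)$ and each renewal factor rescales to the density $(z_i-z_{i-1})^{\alpha-1}/(\varGamma(1-\alpha)\varGamma(\alpha))$ (with $z_0:=0$), the powers of $t$ together with the slowly varying factors are exactly matched by the prefactor $(\mmp\{\xi>t\}/f(t))^l\sim t^{-l(\alpha+\rho)}(\ell_{\xi}(t)/\ell_f(t))^l$, and the normalized distinct-index moment converges to
\[
\frac{l!}{(\varGamma(1-\alpha)\varGamma(\alpha))^{l}}\int_{0\le z_1\le\cdots\le z_l\le 1}\prod_{i=1}^{l}(1-z_i)^{\rho}(z_i-z_{i-1})^{\alpha-1}\,\dd z_i .
\]
To evaluate this I would integrate from the innermost variable outward: the substitution $z_i=z_{i-1}+(1-z_{i-1})w$ gives $\int_{a}^{1}(1-z)^{c}(z-a)^{\alpha-1}\dd z=(1-a)^{c+\alpha}B(\alpha,c+1)$, so each integration raises the exponent of $(1-\,\cdot\,)$ by $\alpha$ and contributes the factor $B(\alpha,1+\rho+(j-1)(\alpha+\rho))$. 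The product telescopes, the powers $\varGamma(\alpha)^{l}$ cancel against $(\varGamma(\alpha))^{-l}$, and one is left with $\frac{l!}{(\varGamma(1-\alpha))^{l}}\prod_{j=1}^{l}\frac{\varGamma(1+\rho+(j-1)(\alpha+\rho))}{\varGamma(j(\alpha+\rho)+1)}$. Finally I would check that each coincidence term is smaller by a factor $t^{-\alpha+o(1)}$ — merging a pair of indices removes one renewal factor (one power $\sim t^{\alpha}$) while replacing two factors of $f$ by one of $f^2$ (same power of $t$) — so after normalization only the distinct part survives. This value is consistent with $\E[J_{\alpha,\rho}(1)^l]$, as it must be since the normalized variable converges in distribution to $J_{\alpha,\rho}(1)$ by \eqref{eq:shot_noise_func_conv} and the bounds above supply uniform integrability.

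Parts (ii) and (iii) run through the same expansion, now only for bounds. For (iii) I would use $|f(t)|\le C\,\mmp\{\xi>t\}$, which is the boundary case $\rho=-\alpha$: there every Beta argument $1+\rho+(j-1)(\alpha+\rho)$ collapses to $1-\alpha>0$, so the limiting simplex integral is still finite — here $\alpha\in(0,1)$ is exactly what keeps both exponents $-\alpha$ and $\alpha-1$ in $(-1,\infty)$ — and Potter's bounds upgrade this to the asserted $O(1)$ estimate. For (ii), a directly Riemann integrable $f$ forces all the arguments $t-y_i$ to remain bounded, hence the leftmost renewal point $y_1=S_{k_1}$ to lie within $O(1)$ of $t$; the mass of $U$ in such a window is $o(1)$ by the key renewal theorem for directly Riemann integrable functions in the infinite-mean regime, while the remaining local renewal structure contributes a bounded factor, so the whole moment is $o(1)$.

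The step I expect to be the crux is the rigorous passage to the limit in the rescaled multiple integral of (i). The density approximation for $U(\dd(ts))$ is only valid on the scale $s\asymp1$, whereas the simplex integrand has integrable but genuine singularities along the diagonals $z_i=z_{i-1}$, which correspond to short inter-renewal gaps where the regularly varying approximation of $U$ degrades. Producing a $t$-uniform dominating function near these diagonals via Potter's inequalities and the uniform convergence theorem for regularly varying functions, and simultaneously verifying that the small-gap contributions are asymptotically negligible, is the technical heart of the argument; by comparison the Beta-integral evaluation and the bookkeeping of slowly varying factors are routine.
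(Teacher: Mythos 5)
Your strategy is sound but it is a genuinely different, and much more laborious, route than the paper's. The paper proves (i) by citation: it is Lemma~5.3 of \cite{Iksanov+Marynych+Meiners:2014} for $\rho\in(-\alpha,0]$, with the remark that the proof there works verbatim for all $\rho>-\alpha$; your proposal essentially reconstructs that proof (multiple renewal integrals, Tauberian rescaling of $U$, Beta-function telescoping -- your constant bookkeeping and the evaluation of the simplex integral are correct). For (ii) and (iii) the paper avoids the combinatorial expansion entirely: it uses the recursive identity $m_l(t)=\int_0^t r_l(t-y)\,{\rm d}U(y)$ with $r_l=\sum_{j<l}v_j|f|^{l-j}m_j$ (formula (5.19) of \cite{Iksanov+Marynych+Meiners:2014}) and induction on $l$, feeding the inductive bound on $m_j$ into the key renewal theorem for (ii) and, for (iii) with $l=1$, the clean identity $\E[\sum_k f(t-S_k)\1_{\{S_k\leq t\}}]=\E[g(Z(t))]$ with $g=f/\mmp\{\xi>\cdot\}$ bounded and $Z(t)=t-S_{\nu(t)-1}$, which in particular gives $\int_0^t\mmp\{\xi>t-y\}\,{\rm d}U(y)=1$ exactly. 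Your approach buys an explicit, self-contained computation of the limit constant; the paper's buys short, uniform proofs of (ii) and (iii) that require no regular-variation structure on $f$ at all.

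Two soft spots in your version. First, in the coincidence analysis for (i), the claim that merging two indices replaces $f\cdot f$ by $f^2$ ``with the same power of $t$'' and hence costs exactly one factor $t^{-\alpha}$ presupposes that the resulting $(l-1)$-fold simplex integral, now containing $(1-z)^{2\rho}$, still converges; this fails when $2\rho\leq -1$, which is possible for $\alpha>1/2$ and $\rho$ close to $-\alpha$. In that regime the merged term is instead dominated by the boundary $y\approx t$ and must be bounded separately (e.g.\ by a part-(iii)-type argument applied to $f^2$, since then $f^2(t)=o(\mmp\{\xi>t\})$); the conclusion of negligibility survives, but not for the reason you give. Second, in (ii) a directly Riemann integrable $f$ does not force the arguments $t-y_i$ to be bounded (dRi functions need not have compact support), and ``the remaining local renewal structure contributes a bounded factor'' is precisely the inductive bound $m_j(t)\leq M_j$ that the paper establishes via the recursion; as written this step is asserted rather than proved. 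Both gaps are patchable, but the second is most easily closed by adopting the paper's induction, at which point the combinatorial expansion for (ii)--(iii) becomes superfluous.
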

\begin{proof}
The formula for the moments of fractionally integrated inverse stable
subordinator (the second equality in \eqref{eq:moment_convergence}) is
known, see for example (3.65) in \cite{Iksanov_book:2016} or (2.17) in
\cite{Iksanov+Marynych+Meiners:2014}.

\begin{proof}[Proof of (I)] In case $\rho\in(-\alpha,0]$ this result is just
Lemma 5.3 in \cite{Iksanov+Marynych+Meiners:2014}. A perusal of the
proof of the aforementioned lemma shows that without any modifications
the constraint $\rho\in(-\alpha,0]$ can be replaced by $\rho>-\alpha$.
\end{proof}

\begin{proof}[Proof of (II)] If $l=1$ and the distribution of $S_1$ is
non-lattice the claim follows from the classical key renewal theorem.
If $l=1$ and the distribution of $S_1$ is lattice, the claim still
holds, see the penultimate centered formula on p.~94 in \cite
{Iksanov+Marynych+Vatutin:2015}. In particular, this means
\begin{equation}
\label{eq:mom_lemma_aux1} 0\leq m_1(t):=\E \biggl[\sum
_{k\geq0} \bigl\llvert f(t-S_k) \bigr\rrvert
\1_{\{S_k\leq
t\}} \biggr]\leq M_1,\quad t\geq0,
\end{equation}
for some constant $M_1>0$. Applying formula (5.19) in \cite
{Iksanov+Marynych+Meiners:2014} we obtain
\begin{equation}
\label{eq:mom_lemma_aux2} m_l(t):=\E \biggl[ \biggl(\sum
_{k\geq0} \bigl\llvert f(t-S_k) \bigr\rrvert
\1_{\{S_k\leq
t\}} \biggr)^l \biggr]=\int_0^t
r_l(t-y){\rm d}U(y),
\end{equation}
where $U(y)=\sum_{k\geq0}\mmp\{S_k\leq y\}$, $y\geq0$ is the renewal
function and
\[
r_l(t)=\sum_{j=0}^{l-1}v_j
\bigl\llvert f(t) \bigr\rrvert ^{l-j}(t)m_j\xch{(t),}{(t)}
\]
for some real constants $v_j$. We proceed by induction. Assume that we know
\[
m_j(t)\to0,\quad t\to\infty,\quad j=1,\ldots,l-1,
\]
in \xch{particular,}{particular}
\[
0\leq m_j(t)\leq M_j,\quad t\geq0,\quad j=1,\ldots,l-1.
\]
Then
\[
\big|r_l(t)\big|\leq\sum_{j=0}^{l-1}M_{j}|v_j||f(t)|^{l-j},
\quad t\geq0,
\]
and the right-hand side is directly Riemann integrable. By the same
reasoning as in case $l=1$ we obtain
\[
m_l(t)\to0,\quad t\to\infty,
\]
by the key renewal theorem.
\end{proof}

\noindent\textbf{Proof of (III).} Again, let us consider the case $l=1$ first. Put
$Z(t):=t-S_{\nu(t)-1}$ and note that
\[
\E \biggl[\sum_{k\geq0}f(t-S_k)
\1_{\{S_k\leq t\}} \biggr]=\E g\bigl(Z(t)\bigr),
\]
where $g(t):=f(t)/\mmp\{\xi>t\}$. Since $g$ is bounded, we have $\E
g(Z(t))=O(1)$, as $t\to\infty$. For arbitrary $l\in\N$ the result
follows from \eqref{eq:mom_lemma_aux1} and \eqref{eq:mom_lemma_aux2} by
induction in the same vein as in the proof of part
(ii).
\end{proof}

In the next lemma we give an upper bound on the moments of random
process with immigration under assumption \eqref{eq:tail_xi_reg_var}.
Recall the notation
$Y(t)=\sum_{k\geq0}X_{k+1}(t-S_k)\1_{\{S_k\leq t\}}$.
\begin{lemma}\label{lem:moments_rpwi}
Assume that \eqref{eq:tail_xi_reg_var} holds for some $\alpha\in(0,1)$.
\begin{itemize}
\item[(i)] Suppose there exists a locally bounded measurable function
$b:[0,\infty)\to[0,\infty)$ such that
\[
b(t)\sim t^{\beta}\ell_b(t),\quad t\to\infty,
\]
for some $\beta>-\alpha$ and $\ell_b$ slowly varying at infinity. If
for every $l\in\N$
\[
\E\bigl[\big|X(t)\big|^{l}\bigr]\leq b^l(t),\quad t\geq0,
\]
then for every $l\in\N$ we have
\begin{equation}
\label{eq:moments_rpwi_gen_bound1} \E\bigl[\big|Y(t)\big|^l\bigr] = O \biggl( \biggl(
\frac{b(t)}{\mmp\{\xi>t\}} \biggr)^l \biggr),\quad t\to\infty.
\end{equation}
\item[(ii)] Suppose that for every $l\in\N$ there exists a directly
Riemann integrable function $b_l:[0,\infty)\to[0,\infty)$ such that
\[
\E\bigl[\big|X(t)\big|^{l}\bigr]\leq b_l(t),\quad t\geq0.
\]
Then, for every $l\in\N$
\begin{equation}
\label{eq:moments_rpwi_gen_bound2} \E\bigl[\big|Y(t)\big|^l\bigr] = o(1),\quad t\to\infty.
\end{equation}
\item[(iii)] Suppose that for every fixed $l\in\N$ we have
\[
\E\bigl[\big|X(t)\big|^{l}\bigr]= O\bigl(\mmp\{\xi>t\}\bigr),\quad t\to
\infty.
\]
Then, for every $l\in\N$
\begin{equation}
\label{eq:moments_rpwi_gen_bound3} \E\bigl[\big|Y(t)\big|^l\bigr] = O(1),\quad t\to\infty.
\end{equation}
\end{itemize}
\end{lemma}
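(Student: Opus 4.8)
The plan is to reduce the three moment estimates to the \emph{deterministic} renewal shot noise process of Lemma~\ref{lem:moments} by exploiting the renewal structure of the process. First, since
$|Y(t)|\le\overline{Y}(t):=\sum_{k\ge0}|X_{k+1}(t-S_k)|\1_{\{S_k\le t\}}$, it suffices to bound the moments $m_l(t):=\E[\overline{Y}(t)^l]$ of the random process with immigration driven by the non-negative response $|X|$, which inherits all the moment hypotheses of the lemma. Splitting off the first epoch $S_0=0$ gives the exact distributional recursion $\overline{Y}(t)\eqdistr|X(t)|+\1_{\{\xi\le t\}}\overline{Y}'(t-\xi)$, where $\overline{Y}'$ is an independent copy of $\overline{Y}$ that is independent of the pair $(X,\xi)$. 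Raising to the $l$-th power, taking expectations, and isolating the $i=l$ term yields the renewal equation $m_l=R_l+F\ast m_l$, with $F$ the law of $\xi$ and forcing term
\[
R_l(t)=\E\bigl[|X(t)|^l\bigr]+\sum_{i=1}^{l-1}\binom{l}{i}\E\bigl[|X(t)|^{l-i}\1_{\{\xi\le t\}}m_i(t-\xi)\bigr].
\]
Once each $m_l$ is known to be finite and locally bounded (all moments of $|X(t)|$ and of $\nu(t)$ being finite), one iterates to obtain $m_l(t)=\int_{[0,\,t]}R_l(t-y){\rm d}U(y)=\E[\sum_{k\ge0}R_l(t-S_k)\1_{\{S_k\le t\}}]$, i.e.\ a renewal shot noise process with the deterministic response $R_l$, to which Lemma~\ref{lem:moments} applies directly.

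I would then argue by induction on $l$. For the base case $l=1$, independence of $X_{k+1}$ from $S_k$ gives $m_1(t)=\E[\sum_{k\ge0}g_1(t-S_k)\1_{\{S_k\le t\}}]$ with $g_1(s)=\E|X(s)|$, so that parts (i), (ii), (iii) follow immediately from the corresponding parts of Lemma~\ref{lem:moments} applied to $g_1$, which is dominated respectively by $b$, by $b_1$, and by $O(\mmp\{\xi>t\})$. In the inductive step I insert the bounds already obtained for $m_1,\dots,m_{l-1}$ into $R_l$, estimate its asymptotic order, and read off the corresponding estimate for $m_l$ from Lemma~\ref{lem:moments}.

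The crux is estimating the cross terms $\E[|X(t)|^{l-i}\1_{\{\xi\le t\}}m_i(t-\xi)]$, which retain the dependence between $X(t)$ and $\xi$ and carry a time shift inside $m_i$. In case (i), writing $\psi(s):=b(s)/\mmp\{\xi>s\}$ and using the induction hypothesis $m_i(s)\le C_i\psi(s)^i$, I would decouple by Hölder's inequality with exponents $l/(l-i)$ and $l/i$—legitimate since $X(t)$ has all moments bounded by powers of $b(t)$—to obtain the bound $C_i\,b^{l-i}(t)\,(\E[\1_{\{\xi\le t\}}\psi(t-\xi)^l])^{i/l}$. As $\psi^l$ is regularly varying of positive index $l(\alpha+\beta)$ and $F$ is a probability measure, the single $\xi$-integral is $O(\psi(t)^l)$, so each cross term is $O(b^l(t)/\mmp^i\{\xi>t\})$; the dominant contribution $i=l-1$ gives $R_l(t)=O(b^l(t)/\mmp^{l-1}\{\xi>t\})$, which is regularly varying of index $l\beta+(l-1)\alpha>-\alpha$, and Lemma~\ref{lem:moments}(i) upgrades this to $m_l(t)=O((b(t)/\mmp\{\xi>t\})^l)$, closing the induction. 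In cases (ii) and (iii) the cross terms are milder: bounding $m_i(t-\xi)$ by its uniform bound (finite by induction) and $\1_{\{\xi\le t\}}\le1$ leaves a term bounded, up to a constant, by $\E[|X(t)|^{l-i}]$, which is directly Riemann integrable (ii) or $O(\mmp\{\xi>t\})$ (iii); hence $R_l$ is of the same type, and Lemma~\ref{lem:moments}(ii), (iii) give $m_l=o(1)$, respectively $m_l=O(1)$. I expect the only genuinely delicate points to be the regular-variation estimate of that convolution-type $\xi$-integral in case (i) and the justification of the renewal equation through local boundedness and finiteness of $m_l$.
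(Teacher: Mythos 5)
Your proposal follows essentially the same route as the paper: reduce to the non-negative process $Z(t)=\sum_{k\ge0}|X_{k+1}(t-S_k)|\1_{\{S_k\le t\}}$, set up the one-step distributional recursion and the resulting renewal equation, induct on $l$, and feed the resulting deterministic forcing term into Lemma~\ref{lem:moments}. The only (harmless) deviation is that for the cross terms in part (i) you decouple via H\"older, whereas the paper simply integrates out the independent copy and bounds $m_i(t-y)$ by $\sup_{0\le y\le t}m_i(y)\sim C_i(b(t)/\mmp\{\xi>t\})^i$; both yield the same order $O(b^l(t)/(\mmp\{\xi>t\})^{l-1})$ for the forcing term, so the argument is correct.
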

\begin{proof}
Put $a_l(t):=\E[|X(t)|^l]$ for $l\in\N$ and
\[
Z(t):=\sum_{k\geq0}\big|X_{k+1}(t-S_k)\big|
\1_{\{S_k\leq t\}},\quad t\geq0.
\]
Clearly, $\E[|Y(t)|^l]\leq\E[[Z(t)]^l]$ for all $t\geq0$ and $l\in\N
$. We prove \eqref{eq:moments_rpwi_gen_bound1}, \eqref
{eq:moments_rpwi_gen_bound2} and \eqref{eq:moments_rpwi_gen_bound3}
with $\E[Z(t)^l]$ replacing $\E[|Y(t)|^l]$ in the left-hand sides.
From the definition of random process with immigration it follows that
\[
Z(t)\eqdistr\big|X(t)\big|+\widehat{Z}(t-\xi)\1_{\{\xi\leq t\}},\quad t\geq0,
\]
where $\widehat{Z}(t)\eqdistr Z(t)$ for every fixed $t\geq0$ and
$\widehat{Z}(t)$ is independent of $(X,\xi)$ in the right-hand side.
Taking expectations we obtain
\begin{equation}
\label{eq:rpwi_first_moment} \E\bigl[Z(t)\bigr] = a_1(t) + \E\bigl[Z(t-
\xi_1)\bigr]\1_{\{\xi\leq t\}},\quad t\geq0,
\end{equation}
whilst, for $l\geq2$, we have
\begin{align}
&\E\bigl[Z(t)^l\bigr]\notag\\
 &\quad= a_l(t)+\sum
_{j=1}^{l-1}\binom{l}{j}\E \bigl[\big|X(t)\big|^{l-j}
\bigl(\widehat{Z}(t-\xi)\bigr)^j\1_{\{\xi\leq t\}}\bigr]+\E\bigl[Z(t-
\xi)^l\1 _{\{\xi\leq t\}}\bigr]
\nonumber
\\
&\quad=a_l(t)+\sum_{j=1}^{l-1}
\binom{l}{j}\int_0^{\infty}\int
_0^t z^{l-j}\E \bigl[Z(t-y)^j
\bigr]\mmp\bigl\{\big|X(t)\big|\in{\rm d}z,\xi\in{\rm d}y\bigr\}
\nonumber
\\
&\qquad+\E\bigl[Z(t-\xi)^l\1_{\{\xi\leq t\}}\bigr]
\nonumber
\\
&\quad\leq a_l(t)+\sum_{j=1}^{l-1}
\binom{l}{j} a_{l-j}(t)\sup_{0\leq y\leq
t}\E
\bigl[Z(y)^j\bigr]+\E\bigl[Z(t-\xi)^l\1_{\{\xi\leq t\}}
\bigr]\label{eq:rpwi_higher_moment}.
\end{align}

\renewcommand{\theCase}{\Roman{Case}}
\begin{Case} From Lemma \ref{lem:moments}(i) and formula \eqref
{eq:rpwi_first_moment} using the inequality $a_1(t)\leq b(t)$, $t\geq
0$, we obtain
\[
\E\bigl[Z(t)\bigr] = O \biggl(\frac{b(t)}{\mmp\{\xi>t\}} \biggr),\quad t\to\infty.
\]
Thus, \eqref{eq:moments_rpwi_gen_bound1} holds for $l=1$. We proceed by
induction. Assume that for every $j=1,\ldots,l-1$ there exists $C_j>0$
such that
\[
\E\bigl[Z(t)^j\bigr]\leq C_j \biggl(\frac{b(t)}{\mmp\{\xi>t\}}
\biggr)^j,\quad t\geq0.
\]
This implies
\[
\sup_{0\leq y\leq t}\E\bigl[Z(y)^j\bigr]\leq
C_j\sup_{0\leq y\leq t} \biggl(\frac
{b(y)}{\mmp\{\xi>y\}}
\biggr)^j\sim C_j \biggl(\frac{b(t)}{\mmp\{\xi>t\}
}
\biggr)^j,
\]
where the last relation follows from the regular variation of $t\mapsto
b(t)/\mmp\{\xi>t\}$ with positive index $\beta+\alpha$. Hence, from
equation \eqref{eq:rpwi_higher_moment} and the inequalities $a_j(t)\leq
b^j(t)$, $t\geq0$, $j=1,\ldots,l-1$, we deduce
\[
\E\bigl[Z(t)^l\bigr] \leq C^{\prime}\frac{b^l(t)}{(\mmp\{\xi>t\})^{l-1}} + \E
\bigl[Z(t-\xi)^l\1_{\{\xi\leq t\}}\bigr],\quad t\geq0,
\]
for some $C^{\prime}=C^{\prime}_l>0$. Since $t\mapsto C^{\prime
}b^l(t)/(\mmp\{\xi>t\})^{l-1}$ is regularly varying with index $l(\beta
+\alpha)-\alpha>-\alpha$, Lemma \ref{lem:moments}(i) yields
\[
\E\bigl[Z(t)^l\bigr] = O \biggl( \biggl(\frac{b(t)}{\mmp\{\xi>t\}}
\biggr)^l \biggr),\quad t\to\infty.
\]
\end{Case}

\begin{Case} Arguing by induction as in the proof of case (i) we
see from formulae \eqref{eq:rpwi_first_moment} and \eqref
{eq:rpwi_higher_moment} that
\begin{equation*}
\E\bigl[Z(t)^l\bigr] \leq\widehat{b}^{\prime}_l(t)
+ \E\bigl[Z(t-\xi_1)^l\1_{\{\xi
\leq t\}}\bigr],\quad t
\geq0,
\end{equation*}
for a directly Riemann integrable function $\widehat{b}^{\prime}_l$.
The claim follows from the key renewal theorem.
\end{Case}

\begin{Case} For $l=1$ the claim follows from Lemma \ref
{lem:moments}(iii) and formula \eqref{eq:rpwi_first_moment}. Using
inductive argument once again we obtain from \eqref
{eq:rpwi_higher_moment} that
\begin{equation*}
\E\bigl[Z(t)^l\bigr] \leq C^{\prime\prime}\mmp\{\xi>t\} + \E
\bigl[Z(t-\xi_1)^l\1_{\{\xi
\leq t\}}\bigr],\quad t\geq0,
\end{equation*}
for some $C^{\prime\prime}=C^{\prime\prime}_l>0$ and the claim follows
from Lemma \ref{lem:moments}(iii).\qedhere
\end{Case}
\end{proof}

\end{document}